\newtheoremstyle{example}
  {}
  {}
  {}
  {\parindent}
  {\bfseries}
  {.}
  {\newline}
  {}
\newtheorem{theorem}{Theorem}[section]
\newtheorem{proposition}[theorem]{Proposition}
\newtheorem{corollary}{Corollary}[theorem]
\theoremstyle{example}
\newtheorem{example}{Example}[section]
\theoremstyle{definition}
\newtheorem{definition}[theorem]{Definition}
\newtheorem{remark}[corollary]{Remark}
\newcommand*{\classicsets}[1]{\mathbb{#1}} 
\newcommand*{\reals}{\classicsets{R}}
\renewcommand*{\to}{\myapplication{\varrightarrow}}
\newcounter{parnum}[section]
\renewcommand{\theparnum}{\thesection.\arabic{parnum}}
\renewcommand{\paragraph}{\medskip\refstepcounter{parnum}\textbf{\theparnum}~\textbf}
\DeclareMathOperator{\rk}{rk}
\DeclareMathOperator{\HF}{HF}
\newcommand{\C}{\mathbb{C}}
\newcommand*{\field}{\Bbbk}           
\newcommand*{\cnm}[2]{\genfrac{[}{]}{0pt}{}{{\scriptstyle #1}}{{\scriptstyle #2}}}
\def\to{\longrightarrow}
\DeclareMathOperator{\rank}{rank}
\begin{document}
\nolinenumbers

\setlength{\abovedisplayskip}{5pt} \setlength{\belowdisplayskip}{5pt}
\setlength{\abovedisplayshortskip}{5pt}\setlength{\belowdisplayshortskip}{5pt}

\begin{abstract}
  We give an explicit formula for the Waring rank of every binary binomial form with complex coefficients.
  We give several examples to illustrate this, and compare the Waring rank and the real Waring rank for binary binomial forms.
\end{abstract}

\title[Binomial formula for binary forms]{The Waring rank of binary binomial forms}

\author[Brustenga i Moncusí]{Laura Brustenga i Moncusí}
\address{Universitat Autònoma de Barcelona, 08193 Bellaterra, Barcelona, Spain}
\email{brust@mat.uab.cat}

\author[Masuti]{Shreedevi K. Masuti}
\address{Department of Mathematics, Indian Institute of Technology Dharwad, WALMI Campus, PB Road, Dharwad - 580011, Karnataka, India}
\email{shreedevi@iitdh.ac.in}

\date{\today}
\thanks{SKM is supported by INSPIRE faculty award funded by Department of Science and Technology, Govt. of India. She was supported by INdAM COFOUND Fellowships cofounded by Marie Curie actions, Italy, and also partially by a grant from Infosys Foundation for her research in Genova and Chennai, respectively, during which part of the work is done. \\
LBM is supported by DOGC11/05/2018/19006/3 and partially supported by the Mineco grant MTM2016-75980-P.}

\keywords{Waring problem, Sylvester's algorithm, perp ideal, Hilbert function, secant varieties, apolarity theory}
\subjclass[2010]{Primary: 13F20, Secondary: 11P05, 14N05}


\maketitle


\section{Introduction}
\label{sec:introduction}
This paper concerns symmetric tensor decomposition as a sum of rank one tensors, which is also known as the Waring problem for forms.
This topic has a rich history and recently received a huge interest mainly because of its wide applicability in areas as diverse as algebraic statistics, biology, quantum information theory, signal processing, data mining, machine
learning, see \cite{comon-2008-symmet-tensor,kolda-2009-tensor-decom-applic,Lan12}. 
\medskip

Let $\field[x,y]$ be the standard graded polynomial ring with coefficients in the field \(\field\subseteq \C\).
For \(d\ge 0\), we denote by \(\field[x,y]_d\) the \(\field\)-vector space of forms of degree \(d\) in \(\field[x,y]\).
For every form \(F\in \field[x,y]_d\), there exist linear forms $L_1,\dots,L_r\in \field[x,y]_1$ and scalars \(a_1,\dots,a_r\in \field\) with \(r\le d+1\) such that
\[
  F=a_1L_1^d+\dots+a_rL_r^d
\] 
(see  \cite[Theorem 4.2]{reznick-2012-length-binary-forms}). When \(\field=\C\) (resp. \(\field=\reals\)), the least of such possible numbers $r$ is called the \emph{Waring rank of $F$} (resp. the \emph{real Waring rank of \(F\)}) and we denote it by \(\rk(F)\) (resp. \(\rk_{\reals}(F)\)).

The Waring problem is more interesting (and challenging) for coefficient in number fields, see \cite{reznick-2012-length-binary-forms}.
Because of the direct connection with the real world, there is also a lot of interest in the real Waring rank, see for instance \cite{BCG11}.
Except for  \cref{Prop:RealBinomial,exm:real-vs-complex-1st,exm:real-vs-complex-2nd,exm:real-vs-complex-3rd}, we will consider \(\field=\C\) throughout this paper.
\medskip

In \cite{AH95} ~J.~Alexander and A.~Hirschowitz found the Waring rank of a generic form in any number of variables, which was a longstanding open problem for more than a hundred years.
However, the Waring rank for generic forms does not provide information for the Waring rank of a specific form. 

There has been an intense research on the Waring rank of binary forms which goes back to the work of J.~J.~Sylvester. 
Sylvester \cite{sylvester-1851-essay-forms-sketch-eli-trans-can-forms,sylvester-1851-remarkable-dicovery-forms-and-hyperdeterminants} gave an explicit algorithm to compute the Waring rank of a binary form.
But in practice, it is unfeasible for real applications.
We refer to \cite{reznick-2012-length-binary-forms} for an excellent survey on the Waring problem for binary forms.
\medskip

As an immediate consequence of Sylverster's algorithm one can give an explicit formula for the Waring rank of monomials in $\C[x,y]$.
Moreover, E.~Carlini, M.~V.~Catalisano and A.~V.~Geramita recently gave an explicit formula for the Waring rank of monomials in any number of variables (see~\cite[Proposition 3.1]{CCG12}). 
This motives us to look beyond monomials for binary forms.
\medskip

The main result of this paper is an explicit formula for the Waring rank of binomials in \(\C[x,y]\) (see~\cref{thm:binomial}).
In general, it is difficult to describe the Waring rank of $F_1+\cdots+F_k$ in terms of the Waring ranks of $F_1,\ldots,F_k$ as is evidenced by Strassen's conjecture (see~\cite{carlini-catalisano-2018-e-computability,CCG12,CCO17,Tei15}). 
In particular, our formula for binomials is far from being a trivial generalization of the monomial case.

Our main technique to compute the Waring rank of a binomial form \(F\in\C[x,y]\) is Sylvester's algorithm (see Section~\ref{SubSec:Syl}).
In order to apply this algorithm, we need to find a form of least degree in the apolar ideal \(F^{\perp}\), and check whether it is square-free (see Section~\ref{SubSec:Syl}).
For this, we give a nonzero form $g_1$ in \(F^{\perp}\) and, computing the Hilbert function (see Section \ref{SubSec:HF}) of \(F^{\perp}\) in certain degrees, we are able to conclude that $g_1$ is of least degree in \(F^{\perp}\).
Hence, we avoid to compute the entire apolar ideal \(F^{\perp}\).
The techniques we use are elementary, but to obtain the correct result is not trivial.
\medskip

The paper is organized as follows: In \cref{sec:preliminaries}, we fix some notation and gather some preliminary results needed for our purpose.
In Section \ref{Section:BinomialForm}, we give an explicit formula for the Waring rank of a binomial form. 
We also give several examples illustrating our result.
We finish Section \ref{Section:BinomialForm} by comparing the Waring and the real Waring rank of binary binomials.

\section{Preliminaries}
\label{sec:preliminaries}

Let \(S\), \(T\) denote respectively the standard graded polynomial rings \(\C[x,y]\), \(\C[X,Y]\).
For \(d\ge 0\), denote respectively by \(S_d\), \(T_d\) the \(d\)-th graded component of \(S\), \(T\).

\paragraph{Apolarity theory and Sylvester's algorithm.}
\label{SubSec:Syl}
Consider the apolar action of \(T\) on \(S\), that is consider \(S\) as a \(T\)-module by means of the differentiation action
$$
\begin{array}{ cccc}
  \circ: & T \times S &\longrightarrow & S \\
         & (g , F) & \to & g \circ F = g(\partial_{X}, \partial_{Y})(F).
\end{array}
$$

\begin{definition}\label{def:apolar-ideal}
  Let \(F\) be a form in \(S_d\).
  A form \(g\in T_{d'}\) is called \emph{apolar to \(F\)} when \(g\circ F\in S_{d-d'}\) is the zero form.
  The \emph{apolar ideal to \(F\)}, denoted as $F^\perp$, is the homogeneous ideal in $T$ generated by all the forms apolar to \(F\), or equivalently
  $$
  F^\perp=\{g \in T\ |\ g \circ F = 0 \}\subseteq T.
  $$
\end{definition}

The so called Sylvester's algorithm below, an algorithm to compute Waring rank of binary forms, is a consequence of Sylvester's Theorem developed in \cite{sylvester-1851-essay-forms-sketch-eli-trans-can-forms,sylvester-1851-remarkable-dicovery-forms-and-hyperdeterminants}. 
Modern proofs of Sylvester's Theorem may be found in \cite[Theorem 2.1]{reznick-2012-length-binary-forms} (which is an elementary proof), \cite[Section 5]{kung-rota-1984-invariant-theory-binary-forms}, and with further discussion in \cite{kung-1986-gundelfinger-bin-forms,kung-1987-canonical-bin-forms-even-deg,kung-1990-canonical-bin-forms-theme-sylvester,reznick-1996-homogeneous-poly-to-pde}.
Here we just state the final version of the algorithm, see \cite[Remark 4.16]{carlini-grieve-oeding-4-lectures}, \cite[Algorithm 2]{bernardi-gimigliano-ida-2011-computing-symmetric-rank} or \cite[Section 3]{comas-seiguer-rank-binary}.
Recall that given $F\in S_d$, by the Structure Theorem (see \cite[Theorem 1.44(iv)]{iarrobino-determinantal-loci}), the apolar ideal \(F^{\perp}\) to \(F\) is complete intersection and it can be generated by forms $g_1,$ $g_2\in T$ with \(\deg(g_1)+\deg(g_2)=d+2\).

\begin{theorem}[Sylvester's algorithm]\label{thr:sylvesters-algorithm}
  Let \(F\) be a form in \(S_d\).
 Let $g_1,g_2\in T$ be generators of the apolar ideal $F^{\perp}$ with $\deg(g_1) \leq \deg(g_2)$. Then,
  \[
    \rk(F)=\begin{cases}
      \deg(g_1) & \mbox{ if } g_1 \mbox{ is square-free}\\
     d+2 -\deg(g_1) & \mbox{ otherwise }.
    \end{cases}
  \]
\end{theorem}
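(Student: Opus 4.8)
The plan is to reduce the statement to the classical Sylvester Theorem on the structure of binary forms and their apolar ideals, interpreting everything through the apolarity lemma. Recall that the apolarity lemma (the ``Apolarity Lemma'' of Iarrobino--Kanev) says that for a form $F\in S_d$, a set of points $\{[L_1],\dots,[L_r]\}\subseteq \proj(S_1)$ (thought of as linear forms up to scalar) provides a Waring decomposition $F=\sum a_i L_i^d$ if and only if the ideal $I$ of that point set is contained in $F^{\perp}$. Since we are in the binary case, the ideal of a reduced set of $r$ points in $\proj^1$ is generated by a single square-free form $h$ of degree $r$, and conversely every square-free form $h\in T_r$ is the ideal generator of a reduced set of $r$ points. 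So computing $\rk(F)$ amounts to finding the minimal $r$ such that $F^{\perp}$ contains a square-free form of degree $r$.

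First I would set up the Structure Theorem: $F^{\perp}$ is a complete intersection $(g_1,g_2)$ with $\deg g_1+\deg g_2=d+2$ and $\deg g_1\le\deg g_2$; write $e_1=\deg g_1$, $e_2=\deg g_2$, so $e_1\le e_2$ and $e_1+e_2=d+2$. The degree-$k$ piece of $F^{\perp}$ is zero for $k<e_1$, is one-dimensional spanned by $g_1$ for $e_1\le k<e_2$, and has dimension $\ge 2$ (in fact $k-e_1+1$ for $e_2\le k\le d+1$, then it stabilizes) for $k\ge e_2$.

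For the first case, suppose $g_1$ is square-free. Then $g_1$ itself cuts out $e_1$ distinct points in $\proj^1$, and by the apolarity lemma this gives a decomposition of length $e_1$, so $\rk(F)\le e_1$. For the lower bound $\rk(F)\ge e_1$: any Waring decomposition of length $r$ gives a square-free form of degree $r$ in $F^{\perp}$, hence $r\ge e_1$ since $F^{\perp}$ has no nonzero forms in degrees below $e_1$. Thus $\rk(F)=e_1=\deg g_1$.

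For the second case, suppose $g_1$ is not square-free, i.e.\ $g_1$ has a repeated linear factor. The lower bound $\rk(F)\ge d+2-e_1=e_2$ is the delicate point and is the main obstacle: one must show that \emph{no} form of degree $e_1\le k<e_2$ in $F^{\perp}$ is square-free. But in that degree range $F^{\perp}_k$ is spanned by $g_1$ alone (times forms of degree $k-e_1$, i.e.\ $F^{\perp}_k=T_{k-e_1}\cdot g_1$), and every such form is divisible by $g_1$, hence has the repeated factor of $g_1$ and is not square-free. Therefore $\rk(F)\ge e_2=d+2-e_1$. For the upper bound, I would show that $F^{\perp}$ does contain a square-free form of degree $e_2$: since $\dim F^{\perp}_{e_2}\ge 2$, we have a pencil of forms spanned by $g_1'\cdot g_1$ (for a suitable linear $g_1'$, if $e_2>e_1$; if $e_2=e_1$ this term is just $g_1$ itself, handled separately) and $g_2$; a generic element of this pencil is square-free because $g_1$ and $g_2$ share no common factor (complete intersection) so their resultant/discriminant considerations force the generic member of the pencil to have distinct roots — concretely, the discriminant is a nonzero polynomial in the pencil parameter, since it does not vanish identically (e.g.\ $g_2$ or a generic combination can be checked to be square-free, using that $\gcd(g_1,g_2)=1$). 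This gives $\rk(F)\le e_2=d+2-\deg g_1$, completing the proof. The only real subtlety, as noted, is the genericity argument that the pencil in degree $e_2$ contains a square-free member; this is exactly the content of the classical Sylvester Theorem and I would cite \cite[Theorem 2.1]{reznick-2012-length-binary-forms} or \cite[Section 5]{kung-rota-1984-invariant-theory-binary-forms} for it rather than reprove it from scratch.
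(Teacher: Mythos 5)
The paper does not actually prove this theorem: it states it as the classical Sylvester algorithm and refers to \cite[Theorem 2.1]{reznick-2012-length-binary-forms}, \cite[Section 5]{kung-rota-1984-invariant-theory-binary-forms}, \cite{comas-seiguer-rank-binary} and \cite[Remark 4.16]{carlini-grieve-oeding-4-lectures} for proofs, so there is no in-paper argument to compare against. Your apolarity-based argument is essentially the standard modern proof found in those references, and its skeleton is sound: rank equals the least degree of a square-free form in $F^{\perp}$; in degrees $e_1\le k<e_2$ one has $(F^{\perp})_k=T_{k-e_1}\cdot g_1$, so if $g_1$ has a repeated factor no square-free apolar form exists below degree $e_2$, while a square-free $g_1$ immediately gives rank $e_1$. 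Two small points. First, your parenthetical dimension counts are off (for $e_1\le k<e_2$ the space $(F^{\perp})_k$ has dimension $k-e_1+1$, not $1$, and $\dim(F^{\perp})_{e_2}=e_2-e_1+2$), but you never use these numbers, only the divisibility by $g_1$, so nothing breaks. Second, the one genuinely delicate step — that $(F^{\perp})_{e_2}$ contains a square-free form — is argued somewhat circularly ("the discriminant does not vanish identically since a generic combination can be checked to be square-free"); to close it you should either invoke generic smoothness/Bertini in characteristic $0$ for the base-point-free system $\{\lambda g_2+h g_1\}$ (base-point-free because $\gcd(g_1,g_2)=1$), or, as you propose, simply cite \cite[Theorem 2.1]{reznick-2012-length-binary-forms} or \cite[Section 5]{kung-rota-1984-invariant-theory-binary-forms} — which is exactly what the paper itself does for the whole statement.
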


\paragraph{Hilbert function.} 
\label{SubSec:HF}
Let $I$ be a homogeneous ideal in $T$.
The \emph{Hilbert function of $T/I$} is an important numerical invariant associated to $T/I$ defined as
\[
  \HF_{T/I}(d)=\dim_\C T_d - \dim_\C I_d,
\]
where $I_d$ denotes the $\C$-vector space of degree \(d\) forms in $I$.

For $F\in S_d$, let $\langle F\rangle$ denote the $T$-submodule of $S$ generated by $F$, that is, the $\C$-vector space generated by $F$ and by the corresponding derivatives of all orders.
It is well-known that $\langle F \rangle$ determines the Hilbert function of $T/F^\perp$ as follows (see \cite{Ger96}):
\begin{equation}
  \label{Eqn:HFViaDual}\tag{\dag}
  \HF_{T/F^\perp}(d)= \dim_\C \langle F \rangle_d
\end{equation}
where $\langle F \rangle_d$ denotes the $\C$-vector space generated by the forms of degree $d$ in $\langle F \rangle$.

\section{Waring rank of binary binomial forms}
\label{Section:BinomialForm}
In this section, we give an explicit formula for the Waring rank of every binary binomial form (\cref{thm:binomial}).
We also give several examples. 
\cref{Examples:thm} illustrates the result itself.
\cref{Example:Trinomial} shows that the Waring rank of a trinomial depends on its coefficients.
\cref{exm:real-vs-complex-4th,exm:real-vs-complex-3rd,exm:real-vs-complex-1st,exm:real-vs-complex-2nd} compare the Waring and the real Waring rank.

Recall that, given a nonzero homogeneous ideal $I$ of $T$, the \emph{initial degree} of \(I\) is the least integer \(i>0\) for which \(I_i\) is not zero.

{Observe that every binary binomial form can be {expressed} as $F=x^ry^s(ay^\alpha+bx^\alpha)$, with~$r, s\geq 0$ and $\alpha \geq 1$. 
Moreover, since the Waring rank of $F$ is invariant by a linear change of coordinates, $\rk(F)$ does not depend on the coefficients $a$ and $b$.
So, the Waring rank of $F$ is determined by the values of $r,s$ and $\alpha$, as Theorem~\ref{thm:binomial} below shows.}

\begin{theorem}\label{thm:binomial}
{Let $F=ax^ry^{s+\alpha}+ bx^{r+\alpha}y^s\in S_d$ be a binomial form, with $ab\neq 0$, $0 \leq r \leq s$ and $1\leq\alpha$.}
  Let \(q,j\) be the unique nonnegative integers such that $r =q\alpha+ j$ with \(0\le j < \alpha\) and set \(\delta=r+\alpha-s\).
  Then the {Waring rank of $F$ can be computed using the following table.} 
  \begin{center}
    \begin{tabular}{|l l|c|}
      \hline
      \multicolumn{2}{|c|}{Conditions} & \(\rk(F)\) \\
      \hline
      ~~ \(\delta \leq 0\) & & \(s+1\)\\
      \hline
      \multirow{4}{8em}{~~ \(\delta > 0\)}
                                       & \(j=0\), \(r=s\) and \(\alpha> 1\) & \(s+2\) \\
                                       & \(j=\delta\) & \(s+1\) \\
                                       & \(j>\delta\) & \(r+\alpha+1\) \\
                                       & otherwise & \(r+\alpha-j\) \\
      \hline
    \end{tabular}
  \end{center}
\end{theorem}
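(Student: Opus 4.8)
The strategy is to apply Sylvester's algorithm (\cref{thr:sylvesters-algorithm}), so everything comes down to producing a generator $g_1$ of least degree in $F^{\perp}$ and deciding whether it is square-free. First I would compute the apolar action explicitly on the two monomials making up $F = ax^ry^{s+\alpha} + bx^{r+\alpha}y^s$: for a monomial $X^uY^v \in T$, one has $X^uY^v \circ x^my^n = \frac{m!}{(m-u)!}\frac{n!}{(n-v)!} x^{m-u}y^{n-v}$ when $u \le m$, $v\le n$, and $0$ otherwise. This lets me write down the linear conditions on a form $g = \sum c_{uv} X^uY^v$ of a given degree $e$ to lie in $F^{\perp}$: the images $X^uY^v \circ (ax^ry^{s+\alpha})$ and $X^uY^v \circ (bx^{r+\alpha}y^s)$ land in monomials $x^{r-u}y^{s+\alpha-v}$ and $x^{r+\alpha-u}y^{s-v}$, so the two summands interact only when the exponent shifts match up, i.e. when $u$ increases by $\alpha$ and $v$ decreases by $\alpha$. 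This produces a "three-term-recurrence"-like system whose solutions I can parametrize. The quantity $\delta = r+\alpha-s$ measures the imbalance: it controls which of $x$ or $y$ runs out first as one differentiates, and hence the shape of the Hilbert function of $\langle F\rangle$, which by \eqref{Eqn:HFViaDual} equals $\HF_{T/F^\perp}$.

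**Key steps.**
\emph{(1)} Handle the degenerate case $\delta \le 0$: here $s+\alpha \le r+\alpha$ but also $r \le s$, so in fact $s \ge r$ and $s+\alpha \ge r+\alpha$; the point is that $\langle F\rangle$ is spanned in each degree by controlled derivatives and one checks directly that the simplest apolar form has degree $s+1$ and is (after the linear change absorbing $a,b$) essentially $X^{s+1}$ times a square-free factor or $Y$-power argument — yielding $\rk(F) = s+1$ via the square-free branch. \emph{(2)} For $\delta > 0$, I would exhibit an explicit candidate $g_1 \in F^{\perp}$: a natural guess, built from the recurrence, is something like $g_1 = \beta X^{r+\alpha-j}Y^{?} - \gamma Y^{r+\alpha-j}$-type or a product involving $X^j$ and a two-term factor of degree $\alpha$ in the "diagonal" variable; its degree should be $r+\alpha-j$ generically, and the three sub-cases in the table (the exceptional $j=0,r=s,\alpha>1$; then $j=\delta$; then $j>\delta$) correspond exactly to when this candidate either is not actually minimal, or acquires/loses a repeated factor. \emph{(3)} Confirm minimality of $g_1$ by computing $\HF_{T/F^\perp}$ in degrees up to $\deg(g_1)$ using $\dim_\C\langle F\rangle_e$: this is a finite combinatorial count of the span of the mixed partials of the two monomials, and showing it equals $\dim_\C T_e = e+1$ for all $e < \deg(g_1)$ certifies $(F^\perp)_{<\deg g_1} = 0$. \emph{(4)} Decide square-freeness of $g_1$: factor the degree-$\alpha$ "diagonal" piece (it is $\beta Z^\alpha \pm \gamma W^\alpha$ in suitable linear coordinates $Z,W$, hence square-free as $\alpha$ distinct roots), so $g_1$ is square-free iff it carries no extra $X^j$ or $Y$ factor — i.e. iff $j=0$ (no $X^j$) and the other variable's exponent is also $\le 1$. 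When $g_1$ fails to be square-free, Sylvester gives $\rk(F) = d+2-\deg(g_1) = (r+\alpha+s+1) - \deg(g_1)$, and matching this against the table fixes the remaining entries; when it is square-free, $\rk(F)=\deg(g_1)$ directly.

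**Main obstacle.**
The delicate part is step (2)–(3): pinning down the \emph{exact} least degree of a nonzero apolar form and its factorization type as the parameters $(r,s,\alpha)$ — equivalently $(q,j,\delta)$ — vary. The Euclidean-division bookkeeping $r = q\alpha + j$ is precisely what governs whether repeated differentiation in the diagonal direction "wraps around" and annihilates $F$ early (giving a low-degree apolar form) or whether a residual $x^j$ obstruction forces the degree up and introduces a non-reduced factor $X^j$ (the $j > \delta$ regime, giving $\rk(F) = r+\alpha+1$ through the non-square-free branch). I expect the boundary cases $j=\delta$ (apolar form loses its bad factor, dropping the rank back to $s+1$) and the genuinely exceptional $j=0,\ r=s,\ \alpha>1$ (where the complete-intersection generators are $X^{?}$ and $Y^{?}$ and neither $g_1$ nor $g_2$ is square-free, forcing $\rk(F)=s+2$) to require the most careful separate treatment, since in each of these the naive candidate degenerates and one must recompute both the Hilbert function in the critical degree and the factorization of the true minimal generator. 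The rest — the apolarity computation, the finite Hilbert-function counts, and the final substitution into Sylvester's formula — is elementary though notation-heavy.
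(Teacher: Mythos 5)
Your overall skeleton --- Sylvester's algorithm together with a Hilbert-function certification via \eqref{Eqn:HFViaDual} that a candidate apolar form has least degree --- is exactly the method of the paper, but the concrete claims you make about that candidate are wrong in precisely the regimes that carry the content of the theorem. For $\delta\le 0$ the least-degree form in $F^\perp$ is $X^{r+\alpha+1}$, of degree $r+\alpha+1\le s+1$, and it is \emph{not} square-free; the value $s+1$ comes from the second branch of \cref{thr:sylvesters-algorithm}, $\rk(F)=d+2-(r+\alpha+1)=s+1$, not, as you assert, from a square-free apolar form of degree $s+1$. Likewise, for $\delta>0$ your candidate ``of degree $r+\alpha-j$ generically'' is not the minimal generator whenever $2(j+1)\le\delta$ (e.g.\ $r=0$, $s=1$, $\alpha=4$): there the initial degree of $F^\perp$ is $s+j+2\le r+\alpha-j$, realized (after normalizing $a=\frac{(r+\alpha)!s!}{r!(s+\alpha)!}$, $b=1$ so that the apolar action telescopes) by the alternating sum $\sum_{i=0}^{q}(-1)^iX^{r+1-i\alpha}Y^{i\alpha+s-r+j+1}$, which for $j\ge1$ or $r<s$ is divisible by $Y^2$, so that $r+\alpha-j$ again arises through the non-square-free branch. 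Your step (3) would detect that your candidate is not minimal in this range, but your plan supplies no replacement; producing these lower-degree telescoping forms (of degrees $s+j+2$, $s+j+1$, $s+\delta-j$, $s+1$ according to the position of $j$ relative to $\delta/2$ and $\delta$) and verifying $g_1\circ F=0$ is the actual work of the proof.

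Your mechanism for the exceptional row $j=0$, $r=s$, $\alpha>1$ is also incorrect: $F^\perp$ is not generated by two monomials (that happens only when $F$ itself is a monomial), and ``neither $g_1$ nor $g_2$ is square-free'' is not a branch of Sylvester's algorithm --- if the minimal generator (which here has degree $s+2$) were not square-free, the algorithm would output $d+2-(s+2)=s+\alpha$, not $s+2$. The correct mechanism is the opposite one: the degree-$(s+2)$ generator equals $Y\cdot\sum_{i=0}^{q}(-1)^iX^{r+1-i\alpha}Y^{i\alpha}$, and multiplying the second factor by $X^\alpha+Y^\alpha$ shows it is square-free and not divisible by $Y$, so $g_1$ is square-free and the rank equals its degree. (A smaller slip of the same kind: in the $j>\delta$ regime the repeated factor is a power of $Y$, not your claimed $X^j$, though the branch you invoke and the value $r+\alpha+1$ are right.) So while your approach coincides with the paper's in outline, the case analysis as proposed misidentifies the minimal apolar form or the relevant branch in several rows of the table and, as written, does not establish the statement.
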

 {In order to prove Theorem~\ref{thm:binomial}, we use Sylvester's algorithm. This involves finding a form $g_1$ of least degree in $F^\perp$ and checking whether it is square-free. 
We take advantage of the Hilbert function of $T/F^\perp$ to conclude that the $g_1$ in $F^\perp$ that we find is indeed a form of least degree in $F^\perp$. 
The computation of this form $g_1$ and whether it is square-free depends on $\delta$ and $j$. For this reason, we split the proof in five different cases: one case (Case (1)) for \(\delta\le 0\), and four cases (Case (2.i-iv)) for $\delta >0$ ((2.i) $0\leq j < \lceil \frac{\delta-1}{2}\rceil;$ (2.ii) $\delta$ is odd and $j=\frac{\delta-1}{2};$ (2.iii) $\lceil \frac{\delta-1}{2} \rceil < j <\delta -1$, OR $\delta$ is even and $j=\frac{\delta}{2};$ and (2.iv) $\delta -1 \leq j$).
  For each case, we find $g_1$, which will be square-free or not depending on additional conditions for the exponents of $F$. This is also a reason for getting five different cases in the table of the theorem. Notice that case $\delta\leq 0$ in the table is covered in Case (1) and $\delta> 0$ is covered in Case (2): the first row of $\delta>0$ in the table is in Case (2.i), the second and third rows of $\delta>0$ in the table are in Case (2.iv) and the last row of the table is covered in Cases (2.i), (2.ii), (2.iii) and (2.iv).}

\begin{proof}[Proof of Theorem~\ref{thm:binomial}]
  Since $\rk(F)$ is invariant by a linear change of coordinates, for the sake of simplicity we assume that $a=\frac{(r+\alpha)!s!}{r!(s+\alpha)!}$ and $b=1$. For every pair of integers \(m,n\),
  we set $\cnm{m}{n}$ equal to $\frac{m!}{(m-n)!}$ if $m \geq n \geq 0$ and equal to zero otherwise.

  \noindent \textbf{Case (1):} Suppose \(\delta\le 0\), that is $s \geq r+\alpha$.
    Clearly, $g_1=X^{r+\alpha+1} \in F^\perp$.
    We claim that the initial degree of $F^\perp$ is $r+\alpha+1$.
    For $0 \leq i \leq s, $ we have
{\begin{eqnarray*}
      X^i Y^{s-i} \circ F
      &=& \begin{cases}
        a \cdot \cnm{r}{i} \cdot \cnm{s+\alpha}{s-i} \cdot x^{r-i}y^{\alpha+i}+
        \cnm{r+\alpha}{i} \cdot \cnm{s}{s-i} \cdot x^{r+\alpha-i}y^{i} & \mbox{ if } 0 \leq i \leq r \\
        \cnm{r+\alpha}{i} \cdot \cnm{s}{s-i} \cdot x^{r+\alpha-i}y^{i} & \mbox{ if } r < i \leq r+\alpha \\
        0 & \mbox{ if } r+\alpha < i \leq s.
      \end{cases}
    \end{eqnarray*}  }  
 \noindent   It is easy to see that the set $\{X^i Y^{s-i} \circ F:0 \leq i \leq r+\alpha\} \subseteq \langle F \rangle_{r+\alpha}$ is $\C$-linearly independent.
    Therefore by \eqref{Eqn:HFViaDual},
    $$
    \HF_{T/F^\perp}(r+\alpha)=\dim_\C \langle F \rangle_{r+\alpha} = r+\alpha+1.
    $$
    This implies that the nonzero homogeneous elements of $F^\perp$ have degree at least $r+\alpha+1$.
    Since $g_1 \in F^\perp$, the initial degree of $F^\perp$ is $r+\alpha+1$.

    Therefore $g_1$ is part of a minimal generating set of $F^\perp$ and hence there exists $g_2 \in T_{s+1}$ such that $F^\perp=(g_1,g_2)$. As $s \geq r+\alpha$ and $g_1$ is not square free, by Sylvester's algorithm
    \[
      \rk(F) = s+1.
    \]

\noindent  \textbf{Case (2):} Assume \(\delta>0\), that is $s<r+\alpha$.
    We split this case in four cases.

    \noindent{\bf Case (2.i):~} $ 0\leq j \le \lceil \frac{\delta-1}{2}\rceil-1$
    
    First we show that the initial degree of $F^\perp$ is $s+j+2$.
    For this it suffices to show that $\HF_{T/F^\perp}(s+j+1)=s+j+2$ and that there exists a nonzero form of degree $s+j+2$ in $F^\perp$. \\
{First assume that $s <\alpha-j-2.$ Then $r \leq s,$ implies that $r < \alpha,$ and hence $r=j.$ 
Therefore
    \begin{multline*}
          X^{\alpha-1-i}Y^i \circ F 
          =\begin{cases}
            \cnm{r+\alpha}{\alpha-1-i} \cdot \cnm{s}{i} \cdot x^{r+i+1} y^{s-i} & \mbox{if } 0 \leq i \leq s\\
            \cnm{r}{\alpha-1-i} \cdot \cnm{s+\alpha}{i} \cdot x^{r+1+i-\alpha}y^{s+\alpha-i} & \mbox{if } \alpha-j-2 <i \leq \alpha-1.
          \end{cases}
        \end{multline*} Hence $\langle F\rangle$ contains all the monomials of degree $s+j+1$.
        Therefore by \eqref{Eqn:HFViaDual}, $\HF_{T/F^\perp}(s+j+1)=s+j+2$. \\
        Now let $s \geq \alpha-j-2.$    We have
\begin{multline}
\label{Eqn:C1}  X^{r+\alpha-j-1-i}Y^i \circ F =\\
  =\begin{cases}
    \cnm{r+\alpha}{r+\alpha-j-1-i} \cdot \cnm{s}{i} \cdot x^{j+i+1} y^{s-i} & \mbox{if } 0 \leq i \leq \alpha-j-2\\
    \cnm{r}{r+\alpha-j-1-i} \cdot \cnm{s+\alpha}{i} \cdot a \cdot x^{j+1+i-\alpha}y^{s+\alpha-i} +
    \cnm{r+\alpha}{r+\alpha-j-1-i} \cdot \cnm{s}{i} \cdot x^{j+1+i}y^{s-i} & \mbox{if } \alpha-j -2< i \leq s\\
    \cnm{r}{r+\alpha-j-1-i} \cdot \cnm{s+\alpha}{i} \cdot a \cdot x^{j+1+i-\alpha}y^{s+\alpha-i} & \mbox{if } s<i \leq r+\alpha-j-1 .
  \end{cases}
\end{multline}
Thus taking $i=0,1,\ldots,\alpha-j-2$ in \eqref{Eqn:C1} we get
$$\{x^{j+1}y^s,x^{j+2}y^{s-1},\ldots,x^{\alpha-1}y^{s-\alpha+j+2}\} \subseteq \langle F\rangle_{s+j+1}.$$ 
 Notice that $q \alpha-j-2 < s$.  Therefore  taking $i= k\alpha,k\alpha+1,\ldots,k\alpha+\alpha-j-2  $ for $1 \leq k \leq q-1$ in \eqref{Eqn:C1}, we get 
\begin{equation}
\label{Eqn:MonoBig}
\{x^{k\alpha+(j+1)}y^{s-k\alpha},x^{k\alpha+(j+2)}y^{s-1-k\alpha},\ldots,x^{k\alpha+\alpha-1}y^{s-\alpha+j+2-k\alpha}\} \subseteq \langle F\rangle_{s+j+1}
\end{equation}
for all $0 \leq k \leq q-1.$
By assumption on $j$, we have $2(j+1) \leq {\delta}$ and hence $ (q+1) \alpha-(j+1) > s$. 
    Therefore taking $i= (q+1) \alpha-(j+1), (q+1) \alpha-j,\ldots, (q+1) \alpha-1=r+\alpha-j+1$ in \eqref{Eqn:C1}, we get
    \[
      \{x^{q\alpha}y^{s+j+1-q\alpha},x^{q\alpha+1}y^{s+j-q\alpha},\ldots,x^{q\alpha+j}y^{s-q\alpha+1}\} \subseteq \langle F\rangle_{s+j+1}.
    \]
Since $ q \alpha-1 <s$, taking $i= k \alpha-(j+1), k \alpha-j,\ldots, k \alpha-1$ for $k= q,q-1,\ldots,1$ in \eqref{Eqn:C1},   we get
 \begin{equation}
 \label{Eqn:MonoSmall}
      \{x^{k\alpha}y^{s+j+1-k\alpha},x^{k\alpha+1}y^{s+j-k\alpha},\ldots,x^{k\alpha+j}y^{s-k\alpha+1}\} \subseteq \langle F\rangle_{s+j+1}
     \end{equation}
for all $0 \leq k \leq q.$  From Equations \eqref{Eqn:MonoBig} and \eqref{Eqn:MonoSmall} we conclude that 
\[
\{x^iy^{s+j+1-i}: 0 \leq i \leq r \}\subseteq \langle F\rangle_{s+j+1}.
\]
Hence  taking $i=r-j,r-j+1,\ldots,s-1,s$ in \eqref{Eqn:C1} we get 
\[
\{x^{r+1}y^{s-r+j},x^{r+2}y^{s-r+j-1},\ldots,x^{s+j}y,x^{s+j+1}\} \subseteq \langle F\rangle_{s+j+1}.
\]
  Therefore we conclude that all the monomials of degree $s+j+1$ belong to $\langle F\rangle_{s+j+1}$.  }
  Therefore by \eqref{Eqn:HFViaDual},
    $\HF_{T/F^\perp}(s+j+1)=s+j+2$. Hence the nonzero homogeneous elements of $F^\perp $ have degree at least $s+j+2$.

   \noindent We claim that
    \[
      g_1=\sum_{i=0}^{q} (-1)^i X^{r+1- i\alpha}Y^{i \alpha+s-r+j+1} \in (F^\perp)_{s+j+2}.
    \]
    If $q=0$, then $r=j$. Therefore $g_1=X^{r+1}Y^{s+1}$ which clearly belongs to $F^\perp$.
    Hence assume that $q >0$.
   Then  
 \begin{align*}
      g_1\circ F
      & =0\,+ \cnm{r+\alpha}{r+1}\cdot \cnm{s}{s-r+j+1} \cdot x^{\alpha-1}y^{r-j-1}+\\
      &\quad +\sum_{i=1}^{q-1}(-1)^i \Biggl( a\cdot \cnm{r}{r+1-i\alpha}\cdot \cnm{s+\alpha}{i\alpha+s-r+j+1}\cdot x^{i\alpha-1}y^{(1-i)\alpha+r-j-1}+ \\
      &\qquad\qquad\qquad\ +\cnm{r+\alpha}{r+1-i\alpha} \cdot
        \cnm{s}{i\alpha+s-r+j+1} \cdot x^{(i+1)\alpha-1}y^{-i\alpha+r-j-1} \Biggr)\\
      &\quad +(-1)^q a \cdot \cnm{r}{r+1-q\alpha} \cdot \cnm{s+\alpha}{q\alpha+s-r+j+1} \cdot x^{q\alpha-1}y^{(1-q)\alpha+r-j-1}+0.
 \end{align*}
{Observe that, since $a=\frac{(r+\alpha)!s!}{r!(s+\alpha)!}$, for $i=1,\dots,q$,
 $$a\cdot \cnm{r}{r+1-i\alpha}\cdot \cnm{s+\alpha}{i\alpha+s-r+j+1}=\cnm{r+\alpha}{r+1-(i-1)\alpha} \cdot
 \cnm{s}{(i-1)\alpha+s-r+j+1}.$$
 Hence, the previous sum is telescopic and $g_1\circ F=0$.}
    Thus the initial degree of $F^\perp$ is $s+j+2$. Therefore $F^\perp=(g_1,g_2)$ for some $0 \neq g_2 \in T_{r+\alpha-j}$. Moreover,
    \begin{eqnarray*}
      g_1 = \begin{cases}
        Y^2 \left( \sum_{i=0}^{q} (-1)^i X^{r+1- i\alpha}Y^{i \alpha+s-r+j-1} \right) & \mbox{if } j \geq 1 \mbox{ OR } s -r >0\\
        Y \left( \sum_{i=0}^{q} (-1)^i X^{r+1- i\alpha}Y^{i \alpha} \right) & \mbox{if } j=0 \mbox{ and } r=s.
      \end{cases}
    \end{eqnarray*}
    Suppose that $j=0$ and $r=s$.
    Then
    $$
    \left( \sum_{i=0}^{q} (-1)^i X^{r+1- i\alpha}Y^{i \alpha} \right) (X^\alpha+Y^\alpha)=X^{r+\alpha+1}+(-1)^{q}X Y^{r+\alpha}.
    $$
    Since $X^{r+\alpha+1}+(-1)^{q} X Y^{r+\alpha}$ is square-free, $g_1$ is square-free if $j=0$ and $r=s$. Clearly, if $j \geq 1$, OR $s-r >0$, then
    $g_1$ is not square-free. Therefore by Sylvester's algorithm
    \begin{eqnarray*}
      \rk(F) = \begin{cases}
        r+\alpha-j & \mbox{if } j \geq 1 \mbox{ OR } s-r >0\\
        s+2 & \mbox{if } j=0 \mbox{ and } r=s.\\
      \end{cases}
    \end{eqnarray*}

    \noindent{\bf Case (2.ii):~} $\delta$ is odd and $j=\frac{\delta-1}{2}$

    First we prove that the initial degree of $F^\perp$ is $s+j+1$. For $0 \leq i \leq r+\alpha-j$,
    we have
{
      \begin{multline}\label{Eqn:Case2}
      X^{r+\alpha-j-i}Y^i \circ F=\\
      =\begin{cases}
        \cnm{r+\alpha}{r+\alpha-j-i} \cdot \cnm{s}{i} \cdot x^{j+i}y^{s-i} & \mbox{if } 0 \leq i \leq \alpha-j-1\\
        \cnm{r}{r+\alpha-j-i} \cdot \cnm{s+\alpha}{i} \cdot a \cdot x^{j+i-\alpha}y^{s+\alpha-i} +
        \cnm{r+\alpha}{r+\alpha-j-i} \cdot \cnm{s}{i} \cdot x^{j+i}y^{s-i} & \mbox{if } \alpha-j -1< i \leq s\\
        \cnm{r}{r+\alpha-j-i} \cdot \cnm{s+\alpha}{i} \cdot a \cdot x^{j+i-\alpha}y^{s+\alpha-i} & \mbox{if } s< i \leq r+\alpha-j
      \end{cases}
\end{multline}
    \noindent
    Substituting $i=1,\ldots,\alpha-j-1$ in \eqref{Eqn:Case2}, we get
    $$\{x^{j+1}y^{s-1},x^{j+2}y^{s-2},\ldots,x^{\alpha-1}y^{s-\alpha+j+1}\} \subseteq \langle F\rangle_{s+j}.$$
     As $j+1=\delta-j=r+\alpha-s-j$, we have $j+s+1-\alpha=r-j=q \alpha$.  
   Therefore taking $i=k\alpha+1,k \alpha+2,\ldots,k\alpha+\alpha-j-1 $ for $1 \leq k \leq q-1$ in \eqref{Eqn:Case2}, we get
    \begin{equation}
    \label{Eqn:MonoBigC2}
 \{x^{k\alpha+j+1}y^{s-1-k\alpha},x^{k\alpha+j+2} y^{s-k\alpha-2},\ldots,x^{k\alpha+\alpha-1}y^{s-(k+1)\alpha+j+1}\} \subseteq \langle F\rangle_{s+j}
  \end{equation}
  for all $0 \leq k \leq q-1.$
   Taking $i=s+1,s+2,\ldots,r+\alpha-j$ in \eqref{Eqn:Case2} we get
    \[
      \{x^{q\alpha}y^{s-q\alpha+j},x^{q\alpha+1}y^{s-q\alpha+j-1},\ldots,x^{q\alpha+j}y^{s-q\alpha}\} \subseteq \langle F \rangle_{s+j}.
    \] 
   Since $ q\alpha\leq s$, substituting $i=k \alpha+\alpha-j,k\alpha+\alpha-j+1,\ldots,k\alpha+\alpha$ for $k=q-1,q-2,\ldots,1,0$ in \eqref{Eqn:Case2} we get 
    \begin{equation}
    \label{Eqn:MonoSmallC2}
     \{x^{k\alpha}y^{s-k\alpha+j},x^{k\alpha+1}y^{s-k\alpha+j-1},\ldots,x^{k\alpha+j}y^{s-k\alpha}\} \subseteq \langle F \rangle_{s+j}.
  \end{equation}     
   for all $0 \leq k \leq q.$ From Equations \eqref{Eqn:MonoBigC2} and \eqref{Eqn:MonoSmallC2}
   we conclude that 
\[
\{x^iy^{s+j-i}: 0 \leq i \leq r \}\subseteq \langle F\rangle_{s+j}.
\]
Hence  taking $i=r-j+1,r-j+2,\ldots,s-1,s$ in \eqref{Eqn:Case2} we get 
\[
\{x^{r+1}y^{s-r+j-1},x^{r+2}y^{s-r+j-2},\ldots,x^{s+j-1}y,x^{s+j}\} \subseteq \langle F\rangle_{s+j}.
\]
 Therefore we conclude that  all the monomials of degree $s+j$ are in $\langle F \rangle_{s+j}$.} Hence by \eqref{Eqn:HFViaDual}, $\HF_{T/F^\perp}(s+j)=s+j+1$.
    Therefore the nonzero homogeneous elements of $F^\perp$ have degree at least $s+j+1$.

    We claim that
    \[
      g_1=\sum_{i=0}^{q+1} (-1)^i X^{s+j+1-i \alpha} Y^{i \alpha} \in (F^\perp)_{s+j+1}.
    \]
    Indeed,   
     \begin{align*}
      g_1\circ F
      & =0\,+\cnm{r+\alpha}{s+j+1} \cdot \cnm{s}{0} \cdot x^{r+\alpha-s-j-1}y^s \\
      &\quad + \sum_{i=1}^q (-1)^i \biggl(a\cdot \cnm{r}{s+j+1-i \alpha} \cdot \cnm{s+\alpha}{i \alpha} \cdot x^{r+i\alpha-s-j-1}y^{s+(1-i)\alpha}+\\
      &\quad\qquad\qquad\qquad+\cnm{r+\alpha}{s+j+1-i \alpha} \cdot \cnm{s}{i \alpha} \cdot x^{r+(i+1)\alpha-s-j-1}y^{s-i\alpha} \biggr)\\
      &\quad + (-1)^{q+1} a \cdot \cnm{r}{0} \cdot \cnm{s+\alpha}{(q+1)\alpha} \cdot x^r y^{s-q \alpha}+0.
    \end{align*}
    {Observe that, since $a=\frac{(r+\alpha)!s!}{r!(s+\alpha)!},$ for all $i=1,\dots,q$
\[
  a\cdot \cnm{r}{s+j+1-i \alpha} \cdot \cnm{s+\alpha}{i \alpha} =\cnm{r+\alpha}{s+j+1-(i-1) \alpha} \cdot \cnm{s}{(i-1) \alpha}.
\]  Hence, the previous sum is telescopic and $g_1\circ F=0$.
    }
    Therefore there exists $0\neq g_2\in T_{r+\alpha-j+1}$ such that $F^\perp=(g_1,g_2)$.
    As
    \[
      g_1 \left (X^{\alpha} + Y^\alpha \right) = X^{s+j+1+\alpha}+(-1)^{q+1} Y^{s+j+1+\alpha},
    \]
    and $X^{s+j+1+\alpha}+(-1)^{q+1} Y^{s+j+1+\alpha}$ is square-free, $g_1$ is also square-free. Hence by Sylvester's algorithm $\rk(F)=s+j+1=r+\alpha-j$.

    \noindent{\bf Case (2.iii):~}$ \lceil \frac{\delta-1}{2}\rceil < j < \delta - 1 $ OR $\delta$ is even and $j=\frac{\delta}{2}$

    Let $k=\delta-j$. 
    We show that $\rk(F)=r+\alpha-j=s+k$. For this it suffices to show that $\HF_{T/F^\perp}(s+k-1)=s+k$ and that there exists a square-free polynomial of degree $s+k$ in $F^\perp$.
    For $0 \leq i \leq r+\alpha-k+1$, we have
  {
    \begin{multline}\label{Eqn:Case3}
       X^{r+\alpha-k+1-i}Y^i \circ F =\\
      =\begin{cases}
        \cnm{r+\alpha}{r+\alpha-k+1-i} \cdot \cnm{s}{i} \cdot x^{k-1+i}y^{s-i} & \mbox{if } 0 \leq i \leq \alpha-k\\
        \cnm{r}{r+\alpha-k+1-i} \cdot \cnm{s+\alpha}{i} \cdot a \cdot x^{k-1+i-\alpha}y^{s+\alpha-i} +
        \cnm{r+\alpha}{r+\alpha-k+1-i} \cdot \cnm{s}{i} \cdot x^{k-1+i}y^{s-i} & \mbox{if } \alpha-k < i \leq s\\
        \cnm{r}{r+\alpha-k+1-i} \cdot \cnm{s+\alpha}{i} \cdot a \cdot x^{k-1+i-\alpha}y^{s+\alpha-i} & \mbox{if } s< i \leq r+\alpha-k+1.
      \end{cases}
    \end{multline}
    Substituting $i=0,1,\ldots,\alpha-k$ in \eqref{Eqn:Case3} we get
    \[
      \{x^{k-1}y^{s},x^{k}y^{s-1},\ldots,x^{\alpha-1}y^{s-\alpha+k}\} \subseteq \langle F\rangle_{s+k-1}.
    \]
  Notice that $r=q\alpha+j=q\alpha+(\delta-k)$. This implies that $(q+1)\alpha-k=s.$  Hence taking $i= m\alpha,m\alpha+1,\ldots,m\alpha+\alpha-k  $ for $m=1,2,\ldots,q$ in \eqref{Eqn:Case3} we get 
\begin{equation}
\label{Eqn:MonoBigC3}
\{x^{m\alpha+(k-1)}y^{s-m\alpha},x^{m\alpha+k}y^{s-1-m\alpha},\ldots,x^{m\alpha+\alpha-1}y^{s-\alpha+k-m\alpha}\} \subseteq \langle F\rangle_{s+k-1}
\end{equation}
for all $0 \leq m \leq q.$  Now taking $i=(q+1)\alpha-k+1,(q+1)\alpha-k+2,\ldots,(q+1)\alpha-1$ in \eqref{Eqn:Case3} we get
    \[
      \{x^{q\alpha}y^{s-q\alpha+k-1},x^{q\alpha+1}y^{s-q\alpha+k-2},\ldots,x^{q\alpha+k-2}y^{s-q\alpha+1}\} \subseteq \langle F \rangle_{s+k-1}.
    \] 
    Now substituting $i=m\alpha-k+1,m\alpha-k+2,\ldots,m\alpha-1$ for $m=q,q-1,\ldots,1$ in \eqref{Eqn:Case3} we get
\begin{equation}
    \label{Eqn:MonoSmallC3}
     \{x^{m\alpha}y^{s-m\alpha+k-1},x^{m\alpha+1}y^{s-m\alpha+k-2},\ldots,x^{m\alpha+k-2}y^{s-m\alpha-1}\} \subseteq \langle F \rangle_{s+k-1}
\end{equation}     
   for all $0 \leq m \leq q.$ From Equations \eqref{Eqn:MonoBigC3} and \eqref{Eqn:MonoSmallC3} we conclude that 
   \[
   \{x^{m \alpha}y^{s+k-1-m\alpha},x^{m \alpha+1}y^{s+k-2-m\alpha},\ldots, x^{m \alpha+\alpha-1}y^{s+k-m\alpha-\alpha}:0 \leq m\leq q\} \subseteq \langle F \rangle_{s+k-1}.
   \] 
  \noindent 
 Notice that $(q+1)\alpha-1=s+k-1.$ Therefore all the monomials of degree $s+k-1$ are in $ \langle F \rangle$, and 
  thus $\langle F \rangle_{s+k-1}=S_{s+k-1}$. }Hence by \eqref{Eqn:HFViaDual}
    $\HF_{T/F^\perp}(s+k-1)=s+k$.

    \noindent Next we claim that
    \[
      g_1=\sum_{i=0}^{q+1} (-1)^iX^{s+k-i\alpha}Y^{i\alpha} \in (F^\perp)_{s+k}.
    \]
    We have
    \begin{align*}
      g_1 \circ F & = 0\, + \cnm{r+\alpha}{s+k} \cdot \cnm{s}{0} \cdot x^{r+\alpha-s-k} y^{s} \\
                  &\quad + \sum_{i=1}^q (-1)^i\biggl(a \cdot \cnm{r}{s+k-i\alpha}\cdot \cnm{s+\alpha}{i\alpha} \cdot x^{r-s-k+i \alpha}y^{s+(1-i)\alpha} +\\
                  &\quad\qquad\qquad\qquad+ \cnm{r+\alpha}{s+k-i\alpha} \cdot \cnm{s}{i\alpha} \cdot x^{r-s-k+(i+1)\alpha}y^{s-i\alpha}\biggr)\\
                  &\quad + (-1)^{q+1} a \cdot \cnm{r}{0} \cdot \cnm{s+\alpha}{(q+1)\alpha} \cdot x^r y^{s-q\alpha}+0.
    \end{align*}
{Observe that, since $a=\frac{(r+\alpha)!s!}{r!(s+\alpha)!}$, for all $i=1,\dots,q$
\[
  a \cdot \cnm{r}{s+k-i\alpha}\cdot \cnm{s+\alpha}{i\alpha} = \cnm{r+\alpha}{s+k-(i-1)\alpha}\cdot \cnm{s}{(i-1)\alpha}.
\] Hence, the previous sum is telescopic and $g_1\circ F=0$.
    }
    Also
    \[
      g_1 (X^\alpha+ Y^\alpha)=X^{s+k+\alpha}+(-1)^{q+1} Y^{s+k+\alpha}.
    \]
    As $X^{s+k+\alpha}+(-1)^{q+1} Y^{s+k+\alpha}$ is square-free, $g_1$ is also square-free.
    Therefore by Sylvester's algorithm $\rk(F)=s+k=s+\delta-j=r+\alpha-j$.

    \noindent {\bf Case (2.iv):~} $\delta-1 \leq j \leq \alpha-1$

    First we show that the initial degree of $F^\perp$ is $s+1$. For this it suffices to show that $\HF_{T/F^\perp}(s)=s+1$ and that there exists a nonzero form of degree $s+1$ in $F^\perp$. For $0 \leq i \leq r+\alpha$, we have
 { \begin{multline*}
     X^{r+\alpha-i}Y^i \circ F= \\
      = \begin{cases}
        \cnm{r+\alpha}{r+\alpha-i} \cdot \cnm{s}{i} \cdot x^iy^{s-i} & \mbox{if } 0 \leq i < \alpha \\
        \cnm{r}{r+\alpha-i} \cdot \cnm{s+\alpha}{i} \cdot a \cdot x^{i - \alpha} y^{s+\alpha-i} + \cnm{r+\alpha}{r+\alpha-i} \cdot \cnm{s}{i} \cdot x^{i} y^{s-i} & \mbox{if }\alpha \leq i \leq s\\
        \cnm{r}{r+\alpha-i} \cdot \cnm{s+\alpha}{i} \cdot a \cdot x^{i-\alpha}y^{s+\alpha-i} & \mbox{if } s < i \leq r+\alpha
      \end{cases}
    \end{multline*}
\noindent Therefore
    \begin{align*}
      & \{x^iy^{s-i}:0 \leq i < \alpha\}\\
      & \bigcup\textstyle\left\{\cnm{r}{r+\alpha-i} \cdot \cnm{s+\alpha}{i} \cdot a\cdot x^{i - \alpha} y^{s+\alpha-i} + \cnm{r+\alpha}{r+\alpha-i} \cdot \cnm{s}{i} \cdot x^{i} y^{s-i}:\alpha \leq i \leq s\right\} \subseteq \langle F \rangle_s.
    \end{align*}
    This implies that $\langle F \rangle_s$ contains all the monomials of degree $s$.} Hence by \eqref{Eqn:HFViaDual},
    \[
      \HF_{T/F^\perp}(s)=s+1.
    \]
    We claim that
    \[
      g_1=\sum_{i=0}^{q+1}(-1)^i X^{s-(j-\delta)-i\alpha}Y^{i\alpha+(j-\delta)+1} \in (F^\perp)_{s+1}.
    \]
     Notice that $(q+1) \alpha = r-j+\alpha=s+\delta-j$. Therefore 
    we have
    \begin{align*}
      g_1\circ F
      & = 0\,+ \cnm{r+\alpha}{s-j+\delta }\cdot \cnm{s}{j-\delta+1} \cdot x^{r+\alpha-s+j-\delta}y^{s-j+\delta-1} \\
      &\quad +\sum_{i=1}^q(-1)^i \biggl(a \cdot \cnm{r}{s-j+\delta-i \alpha}\cdot \cnm{s+\alpha}{i\alpha+j-\delta+1} \cdot x^{r-s+j-\delta+i\alpha}y^{s-j+\delta+(1-i)\alpha-1} \\
      &\qquad\qquad\qquad + \cnm{r+\alpha}{s-j+\delta-i \alpha} \cdot \cnm{s}{i\alpha+j-\delta+1} \cdot x^{r-s+j-\delta+(i+1)\alpha}y^{s-j+\delta-i\alpha-1}\biggr) \\
      & \quad + (-1)^{q+1} a \cdot \cnm{r}{0} \cdot \cnm{s+\alpha}{(q+1)\alpha+(j-\delta+1)} \cdot x^r y^{s-q\alpha-(j-\delta+1)}+0 .
    \end{align*}
{Observe that, since $a=\frac{(r+\alpha)!s!}{r!(s+\alpha)!}$, for all $i=1,\dots,q$,
\[
  a \cdot \cnm{r}{s-j+\delta-i \alpha}\cdot \cnm{s+\alpha}{i\alpha+j-\delta+1} = \cnm{r+\alpha}{s-j+\delta-(i-1) \alpha}\cdot \cnm{s}{(i-1)\alpha+j-\delta+1}.
\] Hence, the previous sum is telescopic and $g_1\circ F=0$.
    }
    Hence $g_1 \in (F^\perp)_{s+1}$.
    Therefore the initial degree of $F^\perp$ is $s+1$. Hence $g_1$ is part of a minimal generating set of $F^\perp$. Therefore there exists $0 \neq g_2 \in T_{r+\alpha+1}$ such that $F^\perp=(g_1,g_2)$. Clearly, if $j \geq \delta+1$, then $g_1$ is not square-free.
    If $\delta-1 \leq j \leq \delta$, then
    \begin{eqnarray*}
      g_1 (X^\alpha+ Y^\alpha) =\begin{cases}
        X^{s+\alpha+1}+(-1)^{q+1} Y^{s+\alpha+1} & \mbox{if }j= \delta-1\\
        Y (X^{s+\alpha}+(-1)^{q+1} Y^{s+\alpha}) & \mbox{if }j=\delta,
      \end{cases}
    \end{eqnarray*}
    which implies that $g_1$ is square-free. Hence by Sylvester's algorithm
    \[
      \rk(F)=\begin{cases}
        s+1 & \mbox{if } \delta-1 \leq j \leq \delta\\
        r+\alpha+1 & \mbox{if } j \geq \delta+1.
      \end{cases}\qedhere
    \]
\end{proof}

\begin{remark}
  \label{Remark:GenSpeRank}
  The generic rank of a form of degree \(r+s+\alpha\) is $\lceil \frac{r+s+\alpha+1}{2} \rceil$, see \cite{AH95} or \cite{carlini-grieve-oeding-4-lectures}.
  \cref{thm:binomial} illustrates that the Waring rank of a specific form behaves as weirdly as possible compared to the generic rank.
  In particular, the Waring rank of a specific form can be smaller or larger than the generic rank.
\end{remark}

We illustrate \cref{thm:binomial} in the following example.

\begin{example}
  \label{Examples:thm}
  Let the notation be as in Theorem \ref{thm:binomial}.
  \begin{asparaenum}[(1)]
  \item [(0)] ($r=s=0$ and $\alpha \geq 1$) Let $F=x^\alpha+y^{\alpha}$. Clearly,
    $$
    \rk(F)=\begin{cases}
      1 & \mbox{if } \alpha=1\\
      2 & \mbox{if } \alpha=2.
    \end{cases}
    $$

    On the other hand, since $r=s=0, $ we have $\delta=\alpha \geq 1$ and $j=0$. Therefore $\rk(F)$ coincides with the Waring rank stated in Theorem \ref{thm:binomial}.
  \item [(1)] ($r=s$ and $\alpha=1$) Let $F=x^ry^{r+1}+x^{r+1}y^r$. In this case $F^\perp=(g_1,g_2)$ where
    \[
      g_1=X^{r+1}-X^rY+\cdots+(-1)^iX^{r+1-i}Y^i+(-1)^{r+1} Y^{r+1} \mbox{ and } g_2=X^{r+2}.
    \]
    Since $g_1$ is square-free, by Sylvester's algorithm $\rk(F)=r+1$. 

    On the other hand, since $r=s$, we have $\delta=r+\alpha-s=\alpha=1$ and hence $j=0$ for every nonnegative integer $r$. Therefore $ \rk(F)=r+1$ by Theorem \ref{thm:binomial} also. 
    
   {We remark that, in \cite[Theorem 3.1]{bruce-tokcan-3-ranks}, the authors proved that if we consider $F \in \field[x,y]$ for different fields $\field\subseteq \C$, then $F$  has at least three different relative Waring ranks. Their proof in fact shows that $\rk(F)=r+1.$}

  \item [(2)] ($r=0,~s>0$ and $\alpha \geq 1$). Let $F=ay^{s+\alpha}+x^{\alpha}y^s$ where $a=\frac{\alpha!s!}{(s+\alpha)!}$. In this case $\delta=r+\alpha-s=\alpha-s$ and $j=0 $. Hence, by Theorem \ref{thm:binomial},
    $$\rk(F)=\begin{cases}
      s+1 & \mbox{if } \alpha \leq s \\
      \alpha & \mbox{if } \alpha >s.
    \end{cases}
    $$ 

    This can be verified directly (without using Theorem \ref{thm:binomial}) as follows: We have 
    \[
      F^{\perp}=\begin{cases}
        (X^{\alpha+1},Y^{s+1} - X^\alpha Y^{s-\alpha+1}) & \mbox{if } \alpha \leq s \\
        (XY^{s+1},X^{\alpha}-Y^{\alpha}) & \mbox{if } \alpha >s
      \end{cases}
    \]
    and hence by Sylvester's algorithm $\rk(F)$ is as required.
  \end{asparaenum}
\end{example}

The following example illustrates that unlike the binomial case, the Waring rank of a trinomial may depend on its coefficients.

\begin{example}
  \label{Example:Trinomial}
  Consider the quadratic form $F=x^2+xy+y^2$. Then
  \[
    F=[x \hspace*{2mm} y]A_F [x \hspace*{2mm} y]^T \mbox{ where }
    A_F=\left(
      \begin{array}{cc}
        1 & 1/2\\
        1/2 & 1
      \end{array}
    \right).
  \] It is a standard fact from linear algebra that $\rk(F)=\rank(A_F)$.
  Hence \(\rk(F)=2\).
  On the other hand, if $G=x^2+2xy+y^2=(x+y)^2$, then $\rk(G)=1$.
  This shows that unlike the binomial case, the Waring rank of a trinomial may depend on its coefficients.
\end{example}

{Now, we compare the Waring and the real Waring rank of a binomial form. For this purpose we recall~\cref{thr:maximal-rank-forms} below due to Sylvester \cite{sylvester-65-Newtons-hitherto-rule} (for a modern proof see \cite[Section 3]{reznick-2012-length-binary-forms}).
\begin{theorem} [Sylvester]\label{thr:maximal-rank-forms}
 Let \(F\) be a form in \(\reals[x,y]_d\) with \(d\ge 3\) which is
 not a \(d\)-th power of a linear form. If $F$ splits into linear factors over $\mathbb{R},$ then $\rk_\mathbb{R}(F) =d.$
\end{theorem}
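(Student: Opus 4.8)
The plan is to establish the two bounds $\rk_{\reals}(F)\le d$ and $\rk_{\reals}(F)\ge d$ separately. The first is classical and holds for \emph{every} $F\in\reals[x,y]_d$: it is the content of \cite{causa-re-2011-maximum-real-rank} (see also \cite[\S3]{reznick-2012-length-binary-forms}), so I would simply quote it. All the work is in the lower bound.

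To obtain $\rk_{\reals}(F)\ge d$, set $r=\rk_{\reals}(F)$ and fix a shortest real Waring decomposition $F=c_1L_1^d+\cdots+c_rL_r^d$ with $L_1,\dots,L_r\in\reals[x,y]_1$; minimality forces the $L_i$ pairwise non-proportional and the $c_i$ nonzero, and since any family of at most $d+1$ pairwise non-proportional $d$-th powers is linearly independent in $S_d$, the scalars $c_i$ are uniquely determined and hence real. As $F$ is not a $d$-th power, $r\ge2$. So it suffices to prove the following statement, which I would record as a lemma: \emph{if $G=c_1L_1^d+\cdots+c_rL_r^d\in\reals[x,y]_d$ with the $L_i$ pairwise non-proportional real linear forms, $c_i\in\reals\setminus\{0\}$, $d\ge2$ and $2\le r\le d-1$, then $G$ has strictly fewer than $d$ real roots counted with multiplicity, i.e.\ $G$ does not factor into linear forms over $\reals$.} Indeed, applying this lemma with $G=F$ and $r=\rk_{\reals}(F)\le d-1$ would contradict the hypothesis that $F$ splits over $\reals$; hence $r\ge d$, and together with the upper bound $\rk_{\reals}(F)=d$.

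I would prove the lemma by induction on $d$ (for $d=2$ it is vacuous, since then $r\le 1$). Two observations drive the induction. First, when $r=2$ a real linear change of variables turns $L_1,L_2$ into $x,y$, so $G$ becomes $c_1x^d+c_2y^d$ with $c_1c_2\ne0$; its roots in $\proj^1(\C)$ are the $d$-th roots of $-c_2/c_1$, of which at most two are real, so for $d\ge3$ the form $G$ does not split over $\reals$. This settles the base case $d=3$ and the subcase $r=2$ for every $d$. Second, if $3\le r\le d-1$, let $D\in T_1$ be the nonzero first-order operator with $D\circ L_1=0$ (so $D$ is a real directional derivative). Then $D\circ G=d\sum_{i=2}^{r} c_i\,(D\circ L_i)\,L_i^{d-1}$, the $i=1$ summand having dropped out; the coefficients $c_i\,(D\circ L_i)$ are nonzero because $L_i\not\propto L_1$, and $D\circ G\ne0$ because the $L_i^{d-1}$ ($2\le i\le r$) are linearly independent in $S_{d-1}$. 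Thus $D\circ G$ is again a sum of $r-1$ powers of $r-1$ pairwise non-proportional real linear forms, of degree $d-1$, with $2\le r-1\le (d-1)-1$, so by the inductive hypothesis it does not split over $\reals$. On the other hand, a real directional derivative sends forms that split over $\reals$ to forms that split over $\reals$: in coordinates in which $D=\partial_{x'}$, one writes the split form as $c\,(y')^m\prod_k(x'-\sigma_k y')$ with $\sigma_k\in\reals$ and applies Rolle's theorem (with the usual bookkeeping at repeated roots) to $\partial_{x'}\prod_k(x'-\sigma_k y')$. Hence if $G$ split over $\reals$, so would $D\circ G$ — a contradiction. This completes the induction.

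The main obstacle, and the crux of the theorem, is the lemma itself: that real-rootedness is incompatible with a real Waring decomposition of length less than $d$. Everything rests on the two facts used in the inductive step — that a real directional derivative preserves the property of splitting over $\reals$, and that a well-chosen first-order operator eliminates exactly one summand of a decomposition — together with the elementary seed observation that $c_1x^d+c_2y^d$ is never real-rooted when $c_1c_2\ne0$ and $d\ge3$. The upper bound $\rk_{\reals}(F)\le d$, by contrast, is a genuinely separate classical input that I would cite rather than reprove.
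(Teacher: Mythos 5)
Your argument is correct, but note that the paper does not prove this theorem at all: it is quoted as a classical result of Sylvester \cite{sylvester-65-Newtons-hitherto-rule}, with \cite[Section 3]{reznick-2012-length-binary-forms} cited for a modern proof. So there is no in-paper argument to compare against; what you have supplied is a complete, self-contained proof of the quoted result. The structure is sound: the upper bound $\rk_{\reals}(F)\le d$ is correctly outsourced to \cite{causa-re-2011-maximum-real-rank}, and the real content is your lemma that a sum of $r$ pairwise non-proportional real $d$-th powers with nonzero real coefficients and $2\le r\le d-1$ cannot split over $\reals$. The inductive mechanism checks out at every step: choosing a real first-order operator $D$ annihilating $L_1$ turns $G$ into a sum of $r-1$ pairwise non-proportional real $(d-1)$-st powers with nonzero coefficients (the $c_i(D\circ L_i)$ stay nonzero since $L_i\not\propto L_1$); $D\circ G\ne 0$ follows from the linear independence of $r-1\le d-2$ non-proportional $(d-1)$-st powers; differentiation in a real direction preserves splitting over $\reals$ by Rolle's theorem; and the seed case $c_1x^d+c_2y^d$ indeed has at most two real roots among its $d$ distinct roots. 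The one point worth flagging is the hypothesis ``not a $d$-th power of a linear form'': for your step ``$F$ is not a $d$-th power, hence $r\ge 2$'' (and indeed for the truth of the theorem itself) this must be read as ``$F$ is not a scalar multiple of $L^d$'' (equivalently, not a $d$-th power of a \emph{complex} linear form); otherwise $F=-x^4$ would be a counterexample, since it splits over $\reals$, is not a fourth power of any real linear form, yet has real rank $1$. With that reading, which is the standard one and clearly the intended one here, your proof is complete and correct.
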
}

First we note the following result which shows that, unlike in the complex case,
there can be at most two possible values of the real rank of a binomial form depending on its coefficients.
\begin{proposition}\label{Prop:RealBinomial}
Consider a real binomial $F=x^ry^s(ay^\alpha+bx^\alpha)$ with $ab \neq 0$.
For $\alpha$ odd, the real Waring rank of $F$ does not depend on the coefficients $a,b$.
For $\alpha$ even, there are at most two different real Waring ranks for $F$, depending on the coefficients $a,b$.
\end{proposition}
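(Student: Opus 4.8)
\smallskip
\noindent\textbf{Proof strategy for \cref{Prop:RealBinomial}.}
The plan is to use that the real Waring rank is unchanged by real invertible linear changes of coordinates and by multiplication of a form by a nonzero real scalar, and thereby to reduce the binomial $F=x^ry^s(ay^\alpha+bx^\alpha)$ to at most two normal forms that no longer involve the coefficients $a,b$ in an essential way. First I would apply the diagonal substitution $x\mapsto\lambda x$, $y\mapsto\mu y$ with $\lambda,\mu\in\reals\setminus\{0\}$. This sends $F$ to $\lambda^r\mu^s\,x^ry^s(a\mu^\alpha y^\alpha+b\lambda^\alpha x^\alpha)$, so up to the irrelevant overall constant $\lambda^r\mu^s$ the pair $(a,b)$ is replaced by $(a\mu^\alpha,b\lambda^\alpha)$; combined with a further global rescaling of $F$, the only invariant of the coefficient pair that cannot immediately be normalised is the value of $(a/b)(\mu/\lambda)^\alpha$, and in fact only its sign subject to the parity constraint below.

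Then I would split according to the parity of $\alpha$. If $\alpha$ is odd, the map $t\mapsto t^\alpha$ is a bijection of $\reals$, so $(\mu/\lambda)^\alpha$ takes every nonzero real value; choosing $\lambda,\mu$ with $(a/b)(\mu/\lambda)^\alpha=1$ and rescaling, $F$ becomes $x^ry^s(x^\alpha+y^\alpha)$, hence $\rk_{\reals}(F)=\rk_{\reals}\!\left(x^ry^s(x^\alpha+y^\alpha)\right)$ independently of $a,b$. If $\alpha$ is even, then $t\mapsto t^\alpha$ maps $\reals\setminus\{0\}$ onto $\reals_{>0}$, so the sign of $a/b$ (equivalently of $ab$) is preserved; normalising the magnitude to $1$ and rescaling, $F$ becomes $x^ry^s(x^\alpha+y^\alpha)$ when $ab>0$ and $x^ry^s(x^\alpha-y^\alpha)$ when $ab<0$. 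Therefore $\rk_{\reals}(F)$ takes at most the two values $\rk_{\reals}\!\left(x^ry^s(x^\alpha+y^\alpha)\right)$ and $\rk_{\reals}\!\left(x^ry^s(x^\alpha-y^\alpha)\right)$, which proves the statement.

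The computations involved are completely routine; the only delicate point — and the crux of the even case — is keeping track of the signs through the real scalings and observing that a real $\alpha$-th power is forced to be positive when $\alpha$ is even, so that the sign of $ab$ is a genuine invariant which obstructs reducing to a single normal form. One could go further and decide, for given $r,s,\alpha$, whether the two values in the even case actually differ: for the $ab>0$ normal form one reads the rank off \cref{thm:binomial}, while for $x^ry^s(x^\alpha-y^\alpha)$ one may invoke \cref{thr:maximal-rank-forms} whenever that form splits into real linear factors; this refinement is not needed for the proposition as stated.
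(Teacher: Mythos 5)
Your proof is correct and follows essentially the same route as the paper's: a real diagonal change of coordinates (extracting real $\alpha$-th roots, which exist for all signs when $\alpha$ is odd but only for positive reals when $\alpha$ is even) together with invariance of $\rk_{\reals}$ under nonzero real scaling, reducing $F$ to the normal forms $x^ry^s(x^\alpha\pm y^\alpha)$ with the sign of $ab$ as the only possible obstruction. Your version just makes the bookkeeping of the overall scalar and the sign invariant more explicit than the paper does.
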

\begin{proof}
When $\alpha$ is odd, there are always real $\alpha$-roots of $a$ and $b$ regardless of their sign.
So, via a real linear change of coordinates, we may reduce $F$ to $x^ry^s(y^\alpha+x^\alpha)$, and its Waring rank does not depend on $a$, $b$.
Instead, when $\alpha$ is even, we may reduce $F$ to either $x^ry^s(y^\alpha+x^\alpha)$, if $ab>0$, or $x^ry^s(y^\alpha-x^\alpha)$, if $ab<0$. 
\end{proof}

{
We remark that, for every even $\alpha>0$, there are binomials $F$ for which $x^ry^s(y^\alpha + x^\alpha)$ and $x^ry^s(y^\alpha-x^\alpha)$ have the same real rank. For instance, $\rk_{\mathbb{R}}(x^{2k}+y^{2k})=\rk_{\mathbb{R}}(x^{2k}-y^{2k})=2$ for every $k \geq 1.$ 
In what follows we give explicit examples of real binomial forms with two distinct ranks depending on {their} coefficients (Examples \ref{exm:real-vs-complex-2nd}, \ref{exm:real-vs-complex-4th} and \ref{exm:real-vs-complex-3rd}). First we note the following example which compares the Waring and the real Waring rank of a binomial form. Also note that, since $\alpha$ is odd in the following example, the real Waring rank of $F$ is independent of its coefficients by Proposition \ref{Prop:RealBinomial}.}


\begin{example}
  \label{exm:real-vs-complex-1st}
  Let $F=x^ry^r(x\pm y)$ where $r \geq 1$.
  By Theorem \ref{thm:binomial} $\rk(F)=r+1$.
  Whereas, since $F$ splits completely into linear factors in $\mathbb{R}$, $\rk_{\mathbb{R}}(F)=2r+1$ by Theorem \ref{thr:maximal-rank-forms}.
\end{example}


\begin{example}
  \label{exm:real-vs-complex-2nd}
  By Theorem \ref{thr:maximal-rank-forms} $\rk_{\mathbb{R}}( x^3-xy^2)=3$ whereas $\rk_{\mathbb{R}}(x^3+xy^2)=2$ by  \cite[Corollary 2.3]{tokcan-2017-warin-rank}. But $\rk(x^3 \pm xy^2)=\rk(y^3 \pm x^2y)=2$ by Theorem \ref{thm:binomial} (take $r=0,~s=1$ and $\alpha=2$ in Theorem \ref{thm:binomial}). 
\end{example}

{The following example is a generalization of Example \ref{exm:real-vs-complex-2nd}. In fact, this example shows that Theorem~\ref{thm:binomial} covers a much larger class of binary forms than binomial, since some forms can be transformed to a binomial form under a suitable linear change of coordinates.
\begin{example}
 \label{exm:real-vs-complex-4th}
Let $F=\ell(x,y)^k(x^2 \pm y^2)$ where $\ell(x,y)=Ax+By$ is a linear form. To begin with, $F$ is not a binomial form in $x$ and $y$, but $F$ can be transformed to a binomial form by a linear change of coordinates. Namely, let $x_1=Bx\mp Ay$ and $y_1=Ax+By$. Then $F(x_1,y_1)=\frac{1}{B^2\pm A^2}y_1^k(x_1^2\pm y_1^2)$, which is a binomial form in $x_1$ and $y_1$. Now by Theorem \ref{thm:binomial} (take $r=0,~s=k$ and $\alpha=2$ in Theorem~\ref{thm:binomial}) $\rk(F)=k+1.$ On the other hand, when $A$ and $B$ are real, we have  $\rk_{\mathbb{R}}(\ell(x,y)^k(x^2 - y^2))=k+2$ by Theorem \ref{thr:maximal-rank-forms} and $\rk_{\mathbb{R}}(\ell(x,y)^k(x^2 + y^2))=k+1$ by \cite[Corollary 2.3]{tokcan-2017-warin-rank}.
\end{example}
}

{
\begin{example}
\label{exm:real-vs-complex-3rd}
Let $F=x^ry^s(x^2-y^2)$. Then by Theorem \ref{thr:maximal-rank-forms} $\rk_\mathbb{R}(F)=r+s+2$. On the other hand, $\rk_\mathbb{R}(x^ry^s(x^2+y^2))< r+s+2$ by \cite[Theorem 2.2]{blekherman-2016-real-rank}.
\end{example}
}

\section*{Acknowledgements}
This research is a Pragmatic project.
We thank the speakers of Pragmatic 2017 for delivering the insightful lectures.
Specially Enrico Carlini, who introduced us this fascinating subject and provided many useful ideas throughout the preparation of this manuscript.
We also thank the organisers of Pragmatic 2017 for providing the local support.
For the support on travel expenses, the second author thanks INdAM. 
We also thank the anonymous referee for his suggestions and comments that has greatly improved the presentation of the paper.

\bibliographystyle{amsplain}
\bibliography{general}

\end{document}